\newtheorem{thm}{Theorem}
\newtheorem{lem}{Lemma}
\theoremstyle{definition}
\def\-{\mbox{--}}
\newtheorem{pro}{Proposition}
\newtheorem{claim}{Claim}
\newtheorem{remark}{Remark}
\begin{document}

\title{\Large\bf Good upper bounds for the\\ total rainbow connection
of graphs\footnote{Supported by NSFC No.11371205 and PCSIRT.} }
\author{\small Hui Jiang, Xueliang Li, Yingying Zhang\\
\small Center for Combinatorics and LPMC-TJKLC\\
\small Nankai University, Tianjin 300071, China\\
\small E-mail: jhuink@163.com; lxl@nankai.edu.cn; zyydlwyx@163.com}
\date{}
\maketitle
\begin{abstract}
A total-colored graph is a graph $G$ such that both all edges and
all vertices of $G$ are colored. A path in a total-colored graph $G$
is a total rainbow path if its edges and internal vertices have
distinct colors. A total-colored graph $G$ is total-rainbow
connected if any two vertices of $G$ are connected by a total
rainbow path of $G$. The total rainbow connection number of $G$,
denoted by $trc(G)$, is defined as the smallest number of colors
that are needed to make $G$ total-rainbow connected. These concepts
were introduced by Liu et al. Notice that for a connected graph $G$,
$2diam(G)-1\leq trc(G)\leq 2n-3$, where $diam(G)$ denotes the
diameter of $G$ and $n$ is the order of $G$. In this paper we show,
for a connected graph $G$ of order $n$ with minimum degree $\delta$,
that $trc(G)\leq6n/{(\delta+1)}+28$ for $\delta\geq\sqrt{n-2}-1$ and
$n\geq 291$, while $trc(G)\leq7n/{(\delta+1)}+32$ for
$16\leq\delta\leq\sqrt{n-2}-2$ and
$trc(G)\leq7n/{(\delta+1)}+4C(\delta)+12$ for $6\leq\delta\leq15$,
where
$C(\delta)=e^{\frac{3\log({\delta}^3+2{\delta}^2+3)-3(\log3-1)}{\delta-3}}-2$.
This implies that when $\delta$ is in linear with $n$, then the
total rainbow number $trc(G)$ is a constant. We also show that
$trc(G)\leq 7n/4-3$ for $\delta=3$, $trc(G)\leq8n/5-13/5$ for
$\delta=4$ and $trc(G)\leq3n/2-3$ for $\delta=5$. Furthermore, an
example shows that our bound can be seen tight up to additive
factors when $\delta\geq\sqrt{n-2}-1$.

{\flushleft\bf Keywords}: total-colored graph; total rainbow
connection; minimum degree; 2-step dominating set.

{\flushleft\bf AMS subject classification 2010}: 05C15, 05C40,
05C69, 05D40.
\end{abstract}

\section{Introduction}

In this paper, all graphs considered are simple, finite and
undirected. We refer to book \cite{2} for undefined notation and
terminology in graph theory. Let $G$ be a connected graph on $n$
vertices with minimum degree $\delta$. A path in an edge-colored
graph $G$ is a {\it rainbow path} if its edges have different
colors. An edge-colored graph $G$ is {\it rainbow connected} if any
two vertices of $G$ are connected by a rainbow path of $G$. The {\it
rainbow connection number}, denoted by $rc(G)$, is defined as the
smallest number of colors required to make $G$ rainbow connected.
Chartrand et al. \cite{5} introduced these concepts. Notice that
$rc(G)=1$ if and only if $G$ is a complete graph and that
$rc(G)=n-1$ if and only if $G$ is a tree. Moreover, $diam(G)\leq
rc(G)\leq n-1$. A lot of results on the rainbow connection have been
obtained; see \cite{LSS, LSu}.

From \cite{CFMY} we know that to compute the number $rc(G)$ of a
connected graph $G$ is NP-hard. So, to find good upper bounds is an
interesting problem. Krivelevich and Yuster \cite{8} obtained that
$rc(G)\leq20n/{\delta}$. Caro et al. \cite{3} obtained that
$rc(G)\leq\frac{\ln\delta}{\delta}n(1+o_{\delta}(1))$. Finally,
Chandran et al. \cite{4} got the following benchmark result.
\begin{thm}\label{thm 1}\cite{4} For every connected graph $G$
of order $n$ and minimum degree $\delta$, $rc(G)\leq 3n/{(\delta +1)}+3$.
\end{thm}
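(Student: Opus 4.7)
The proof I would follow is the standard ``connected two-step dominating set plus spanning tree'' approach. A \emph{connected two-step dominating set} is a set $D \subseteq V(G)$ with $G[D]$ connected such that every vertex of $G$ lies within distance at most $2$ from $D$. The first step is to establish the extremal lemma that such a $D$ of size at most $3n/(\delta+1)-2$ always exists. I would prove this greedily: start from a two-step dominating set of size roughly $n/(\delta+1)$ (a random set of that density two-step dominates with positive probability, since each vertex has at least $\delta+1$ vertices in its closed $1$-neighborhood), then connect it by inserting short paths between its components, charging each added vertex against a private neighborhood of size $\delta+1$ to obtain the factor of $3$.

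Having fixed such a $D$, partition $V(G)\setminus D$ into $V_1 = N(D)\setminus D$ and $V_2 = V(G)\setminus(D\cup V_1)$. Fix a spanning tree $T$ of $G[D]$ and color its $|D|-1$ edges with distinct fresh colors. Then introduce three additional colors used only on ``pendant'' edges: for every $v\in V_1$ select one edge $e_v$ from $v$ to $D$, and for every $u\in V_2$ select one edge $f_u$ from $u$ to a chosen $V_1$-neighbor $v(u)$; this is possible precisely because $D$ is a \emph{two-step} dominating set. Color all remaining edges arbitrarily by reusing existing colors. The palette size is $(|D|-1)+3 \leq 3n/(\delta+1)+3$, matching the claimed bound.

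Rainbow connectivity is then verified case by case on the endpoints $x,y$. If $x,y\in D$, the unique tree path in $T$ is rainbow by construction. Otherwise one prepends at most two pendant edges at each end to reach $D$, then traverses the tree path; since tree colors are fresh, the only potential collisions occur among the pendant edges. The main obstacle is therefore the design of the three-color scheme on pendant edges so that, for every pair $x,y$---the worst case being a concatenation $x - v(x) - d_x - \cdots - d_y - v(y) - y$ that uses \emph{four} pendant edges---some conflict-free realization exists. The clean fix is to allow the choice of $v(u)$ to depend on the destination, or equivalently to prove a properness condition on a two-level auxiliary graph that encodes pendant-edge conflicts; this combinatorial bookkeeping, rather than the dominating-set lemma itself, is where the technical heart of the argument lies.
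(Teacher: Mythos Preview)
This theorem is not proved in the present paper; it is quoted from Chandran, Das, Rajendraprasad and Varma~\cite{4}, so there is no in-paper argument to compare against. I will compare your sketch to the actual source.

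Your outline diverges from~\cite{4} at the choice of dominating structure. You build a connected \emph{two-step} dominating set $D$ and then try to reach $D$ from distance-$2$ vertices via a three-color pendant scheme. Chandran et al.\ instead work with a connected \emph{one-step} dominating set---in fact a connected \emph{two-way} dominating set, meaning every vertex outside $D$ has at least two neighbors in $D$---and prove that such a set of size at most $3n/(\delta+1)-2$ exists. With one-step domination every vertex outside $D$ is already adjacent to $D$, so any $x$--$y$ path uses at most \emph{two} pendant edges, and the two-way condition supplies each outside vertex with enough $D$-edges to avoid a repeat among those two. The additive $+3$ then falls out directly.

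Your two-step route hits precisely the obstacle you flag: when $x,y\in V_2$ the concatenated path $x\,v(x)\,d_x\cdots d_y\,v(y)\,y$ uses four pendant edges, and three colors cannot make four edges pairwise distinct. The proposed fix---letting $v(u)$ depend on the destination, or invoking an unspecified auxiliary-graph properness condition---is not an argument; it is exactly the step that would need real work, and nothing in the sketch supplies it. You do have numerical slack (your own count gives $(|D|-1)+3\le 3n/(\delta+1)$, well under the target), so more pendant colors are affordable, but you would still owe a scheme guaranteeing four distinct colors along every such concatenation. The one-step/two-way approach of~\cite{4} sidesteps this difficulty entirely, which is why it is the cleaner route to the stated bound.
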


The concept of rainbow vertex-connection was introduced by
Krivelevich and Yuster in \cite{8}. A path in a vertex-colored graph
$G$ is a {\it vertex-rainbow path} if its internal vertices have
different colors. A vertex-colored graph $G$ is {\it rainbow
vertex-connected} if any two vertices of $G$ are connected by a
vertex-rainbow path of $G$. The {\it rainbow vertex-connection
number}, denoted by $rvc(G)$, is defined as the smallest number of
colors required to make $G$ rainbow vertex-connected. Observe that
$diam(G)-1\leq rvc(G)\leq n-2$ and that $rvc(G)=0$ if and only if
$G$ is a complete graph. The problem of determining the number
$rvc(G)$ of a connected graph $G$ is also NP-hard; see \cite{CLL,
CLS}. There are a few results about the upper bounds of the rainbow
vertex-connection number. Krivelevich and Yuster \cite{8} proved
that $rvc(G)\leq 11n/{\delta}$. Li and Shi \cite{9} improved this
bound and showed the following results.
\begin{thm}\label{thm 2}\cite{9} For a connected graph $G$ of
order $n$ and minimum degree $\delta$, $rvc(G)\leq 3n/4-2$ for
$\delta=3$, $rvc(G)\leq 3n/5-8/5$ for $\delta=4$ and $rvc(G)\leq
n/2-2$ for $\delta=5$. For sufficiently large $\delta$, $rvc(G)\leq
(b \ln\delta)n/\delta$, where $b$ is any constant exceeding 2.5.
\end{thm}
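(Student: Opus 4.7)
My strategy is the \emph{connected $2$-step dominating set} (C2DS) technique, which was the driver behind Theorem~\ref{thm 1}. Recall that $D\subseteq V(G)$ is a C2DS if $G[D]$ is connected and every vertex of $G$ lies within distance $2$ of $D$. Given such a $D$, I would fix a spanning tree $T$ of $G[D]$, color the vertices of $D$ with $|D|$ pairwise distinct colors, and reserve two additional colors to assign (alternately) to the vertices of $N(D)\setminus D$ that will serve as boundary hops. For any two $u,v\in V(G)$ one routes a path of the form: $u$, then up to two boundary vertices from $N(D)\setminus D$, then a subpath of $T$ lying inside $D$, then up to two more boundary vertices, then $v$. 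The interior of the $T$-portion is rainbow by the distinct coloring on $D$, while the two reserved colors, used alternately, prevent any clash on the at most four boundary vertices. This gives $rvc(G)\leq |D|+O(1)$, and reduces the entire theorem to upper-bounding the minimum size of a C2DS.

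\textbf{Small $\delta$.} Since $3/(\delta+1)$ equals $3/4,\,3/5,\,1/2$ for $\delta=3,4,5$, the three stated bounds match the C2DS size bound $3n/(\delta+1)+O(1)$ from Chandran et al. I would therefore mimic their construction: grow $D$ by a breadth-first layered procedure, at each step adding only those vertices needed either to maintain connectivity or to pull an uncovered vertex into the two-step neighborhood. An amortized count shows that each added vertex absorbs on average at least $(\delta+1)/3$ previously uncovered vertices, yielding the ratio $3/(\delta+1)$. The main subtlety is that rainbow \emph{vertex}-connection affords slightly more flexibility at the boundary than rainbow connection, which is precisely what permits shaving the additive constants to $-2$, $-8/5$, and $-2$ via a careful final-layer accounting; matching these exact numbers is the chief obstacle and requires a case analysis on the local structure of the last frontier layer.

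\textbf{Large $\delta$.} For the asymptotic bound I would use the probabilistic argument in the spirit of Caro et al. Set $p=(b-\varepsilon)(\ln\delta)/(2\delta)$ and form $D_0$ by including each vertex independently with probability $p$; then $\mathbb{E}|D_0|=pn$, and the expected number of undominated vertices is at most $n(1-p)^{\delta+1}=o(pn)$ as soon as $b>5/2$. I would add one neighbor of each exceptional vertex to $D_0$, and then connect the resulting dominating set using the standard Steinerization lemma, which inflates its size by at most a factor of $2$ in a connected host graph. Balancing these two costs produces a C2DS of size $\leq (b\ln\delta)n/\delta$ for any $b>5/2$. The principal hurdle is driving the Steiner phase below the naive factor of $3$, which is exactly where the threshold $5/2$ enters; it requires reusing already-dominated (non-$D_0$) vertices as Steiner points rather than appending brand-new ones.
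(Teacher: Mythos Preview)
This theorem is quoted from \cite{9} and is not proved in the present paper. However, the paper does prove its direct analogue for $trc$ (Theorem~\ref{thm 4}) and states explicitly that it uses ``the same idea in \cite{9}''. That idea is \emph{not} the connected $2$-step dominating set machinery you propose; it is the spanning-tree-with-many-leaves argument. One takes a spanning tree $T$ with $\ell=Leaf(G)$ leaves, gives each of the $n-\ell$ internal vertices its own colour, and observes that the unique $T$-path between any two vertices is vertex-rainbow. Hence $rvc(G)\le n-\ell$, and plugging in the Linial--Sturtevant, Griggs--Wu, and Kleitman--West lower bounds $\ell\ge n/4+2$, $2n/5+8/5$, $n/2+2$, $(1-b\ln\delta/\delta)n$ yields \emph{exactly} $3n/4-2$, $3n/5-8/5$, $n/2-2$, and $(b\ln\delta/\delta)n$ with no further work.

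Your C2DS route has two concrete gaps. First, the colouring scheme you sketch does not work: with only two reserved colours on $N(D)\setminus D$ there is no global ``alternation'' that guarantees the two boundary vertices $x,y$ on a path $u\,x\,(\text{in }D)\,y\,v$ receive different colours; one needs either a strong-domination hypothesis plus a random colouring (as in the proof of Theorem~\ref{thm 5} here) or substantially more colours. Second, even granting the bound $|D|\le 3n/(\delta+1)-2$ from Lemma~\ref{lem2}, any positive number of extra boundary colours already overshoots the targets $3n/4-2$, $3n/5-8/5$, $n/2-2$; the ``careful final-layer accounting'' you allude to cannot recover these exact additive constants from a C2DS of that size. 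For large $\delta$, the threshold $b>5/2$ does not arise from a Steinerization factor as you suggest; it is inherited verbatim from the Kleitman--West leaf bound, so your probabilistic-plus-Steiner outline, while plausible in spirit, is aiming at the wrong mechanism.
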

\begin{thm}\label{thm 3}\cite{9} A connected graph $G$ of order $n$
with minimum degree $\delta$ has $rvc(G)\leq 3n/(\delta +1)+5$ for
$\delta\geq \sqrt{n-1}-1$ and $n\geq 290$, while $rvc(G)\leq
4n/(\delta+1)+5$ for $16\leq \delta\leq\sqrt{n-1}-2$ and $rvc(G)\leq
4n/(\delta +1)+C(\delta)$ for $6\leq \delta\leq15$, where
$C(\delta)=e^{\frac{3\log({\delta}^3+2{\delta}^2+3)-3(\log3-1)}{\delta-3}}-2$.
\end{thm}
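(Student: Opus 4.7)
The plan is to adapt the Chandran--Das--Rajendraprasad--Varma strategy underlying Theorem \ref{thm 1} to the vertex-coloring setting. The central object is a \emph{connected two-step dominating set} $D\subseteq V(G)$: a set such that $G[D]$ is connected and every vertex of $G$ lies within distance $2$ of $D$. Once we have such a $D$, I would color all vertices of $D$ with pairwise distinct colors and use only a bounded number of additional colors on $V(G)\setminus D$, yielding a palette of size $|D|+O(1)$. Rainbow vertex-connectivity then reduces to the claim that any two vertices are joined by a short path whose ``inner portion'' lies entirely in $G[D]$, where colors are pairwise distinct by construction.

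First I would produce a connected two-step dominating set $D$ of size roughly $\alpha\,n/(\delta+1)$, with $\alpha=3$ in the large-$\delta$ regime and $\alpha=4$ otherwise. The standard greedy construction takes a maximal set $I$ of vertices pairwise at distance $\ge 3$; a ball argument gives $|I|\le n/(\delta+1)$, and $I\cup N(I)$ two-step dominates $G$. One then connects the pieces by short paths, and the cost of bridging is exactly what separates the three regimes: for $\delta\ge\sqrt{n-1}-1$ the fragments are close enough that constantly many connectors suffice, yielding $\alpha=3$; for $16\le\delta\le\sqrt{n-1}-2$ the bridging is one unit more expensive per fragment, giving $\alpha=4$; and for $6\le\delta\le 15$ the connecting paths may be long, contributing the combinatorial factor $C(\delta)$. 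Once $D$ is fixed I would give its vertices distinct colors and then add a small number of ``flag'' colors on $N(D)\setminus D$, chosen so that every vertex of $V(G)\setminus D$ has a first-shell neighbor with a usable color; this produces the explicit additive constants $+5$ and $+C(\delta)$.

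With this coloring in hand, verifying rainbow vertex-connectivity is routine: for arbitrary $u,v\in V(G)$ I select shortest paths of length at most $2$ from $u$ and $v$ into $D$, concatenate through a path in $G[D]$, and observe that the at most four internal vertices outside $D$ use flag colors picked to avoid collisions, while the internal vertices inside $D$ are distinctly colored by design. The main obstacle I expect is the first step: simultaneously pinning down both (i) the exact coefficients $3,4,4$ on $n/(\delta+1)$ in the three regimes and (ii) the tight additive constants $5$, $5$, and $C(\delta)$. The coefficient is controlled by how economically one can both two-step dominate and glue the pieces into a connected subgraph, which forces the case split on $\delta$; the additive constant comes from a careful second-shell argument bounding the number of flag colors required, and this is precisely where the hypothesis $n\ge 290$ enters the densest regime and where the exponential $C(\delta)$ is forced in the sparse regime.
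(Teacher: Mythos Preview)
Your overall architecture---build a connected two-step dominating set $D$, give its vertices pairwise distinct colors, use $O(1)$ extra colors outside---is the right skeleton, and it matches the approach in \cite{9} (which the present paper reproduces almost verbatim in its proof of Theorem~\ref{thm 5}). But the mechanism you describe for the case split and for the constants is not what actually happens, and the missing piece is substantive enough that your outline would not close.

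The connected dominating set $D=S$ always has size at most $3n/(\delta+1)-2$; this is Lemma~\ref{lem2}, and it holds uniformly across all three regimes. The jump from coefficient $3$ to coefficient $4$ does \emph{not} come from the cost of gluing fragments of $S$. It comes from the coloring of the first shell $N^1(S)$: one partitions $N^1(S)$ into parts $X,Y$ and colors each part \emph{randomly} with a small palette, then uses the Lov\'asz Local Lemma (Lemma~\ref{lem3}) to guarantee that every second-shell vertex sees two differently colored neighbors. For the LLL dependency bound one needs each first-shell vertex to have fewer than $(\delta+1)^2$ second-shell neighbors. When $\delta\ge\sqrt{n-1}-1$ this is automatic. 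When $\delta$ is smaller, one must first strip off the set $X_1\cup Y_1$ of first-shell vertices with too many second-shell neighbors and give \emph{them} pairwise distinct colors; an edge-count via Lemma~\ref{lem1} shows $|X_1\cup Y_1|\le n/(\delta+1)$, and that is the source of the extra $n/(\delta+1)$. The function $C(\delta)$ is precisely the palette size forced by the LLL inequality $e\cdot(C(\delta)+2)^{-\lceil\delta/3\rceil+1}\bigl(((\delta+1)^2-1)\lceil\delta/3\rceil+1\bigr)<1$, and the hypothesis $n\ge 290$ is what makes the analogous inequality hold with seven colors in the dense regime.

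Two ingredients your plan omits are therefore essential: the $\delta/3$-\emph{strong} property of $S$ (each second-shell vertex has at least $\delta/3$ first-shell neighbors, needed for the random-coloring argument), and the Local Lemma itself. Without these, a deterministic ``flag color'' scheme on $N(D)\setminus D$ cannot guarantee that two second-shell vertices on the same side avoid a color collision on their first-shell entry points, and your verification step would fail exactly there.
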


Recently, Liu et al. \cite{10} proposed the concept of total rainbow
connection. A {\it total-colored graph} is a graph $G$ such that
both all edges and all vertices of $G$ are colored. A path in a
total-colored graph $G$ is a {\it total rainbow path} if its edges
and internal vertices have distinct colors. A total-colored graph
$G$ is {\it total-rainbow connected} if any two vertices of $G$ are
connected by a total rainbow path of $G$. The total rainbow
connection number, denoted by $trc(G)$, is defined as the smallest
number of colors required to make $G$ total-rainbow connected. It is
easy to observe that $trc(G)=1$ if and only if $G$ is a complete
graph. Moreover, $2diam(G)-1\leq trc(G)\leq 2n-3$. The following
proposition gives an upper bound of the total rainbow connection
number.
\begin{pro}\label{prop 1}\cite{10} Let $G$ be a connected graph on $n$
vertices and $q$ vertices having degree at least $2$. Then,
$trc(G)\leq n-1+q$, with equality if and only if $G$ is a tree.
\end{pro}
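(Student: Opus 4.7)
The plan is to establish the upper bound via a constructive spanning-tree coloring, and then to handle the two directions of the equality characterization separately. For the upper bound, fix an arbitrary spanning tree $T$ of $G$. Every non-leaf of $T$ has degree at least $2$ in $T$ and hence in $G$, so $T$ has at most $q$ non-leaves. Assign pairwise distinct colors $1,2,\dots,n-1$ to the $n-1$ edges of $T$, and fresh distinct colors to the non-leaves of $T$ (at most $q$ more). The leaves of $T$ and the non-tree edges of $G$ may receive arbitrary already-used colors. For any pair $u,v$, the unique $u$-$v$ path in $T$ has pairwise distinct edge colors by construction, its internal vertices are non-leaves of $T$ with pairwise distinct vertex colors, and the edge and vertex color classes are disjoint, so it is a total rainbow path in $G$. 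Therefore $trc(G)\le n-1+q$.

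For the forward direction of the equality (if $G$ is a tree, then $trc(G)=n-1+q$), I need the matching lower bound $trc(G)\ge n-1+q$, which I would prove via three observations. (i) Any two edges of the tree lie on a common tree path, and a total rainbow path must have distinct edge colors, forcing all $n-1$ edges to be distinctly colored. (ii) Any two non-leaves $v_1,v_2$ can be placed simultaneously as internal vertices of one path: since each has degree $\ge 2$, each has a neighbor off the $v_1v_2$-path, and the path between these two neighbors makes both $v_1,v_2$ internal; hence all $q$ non-leaves need distinct colors. (iii) For any edge $e$ and any non-leaf $v$, one can build a path that uses $e$ and has $v$ as an internal vertex (by stepping from $v$ into a neighbor away from $e$ in $T$ and then extending across $e$ into the opposite component), so the color of $e$ must differ from the color of $v$. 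Combining (i)--(iii) shows that all $n-1+q$ colors must be pairwise distinct.

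The hard part is the reverse direction: if $G$ is not a tree, then strictly $trc(G)<n-1+q$. Since $G$ contains a cycle, the first natural attempt is to pick a spanning tree with strictly more than $n-q$ leaves, i.e.\ fewer than $q$ non-leaves, which works whenever $G$ has a 2-connected block containing a vertex that is not a cut vertex of $G$. When the block structure prevents this (for instance when every vertex of a 2-connected block happens to be a cut vertex of $G$, as in a triangle with one pendant edge at each vertex), the saving has to come from the coloring itself: reuse one color on a cycle edge or on an internal cycle vertex, and re-route any pair whose tree path is spoiled through the other arc of the cycle. The main obstacle is exactly this case analysis: one must guarantee that after the saving every pair of vertices retains at least one total rainbow path in $G$, uniformly across all possible block-cut-tree structures of a connected non-tree graph.
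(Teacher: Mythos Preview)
The paper does not actually prove this proposition at all: it is stated with a citation to \cite{10} and immediately used, with no argument given. So there is nothing in the paper to compare your proof against, and your write-up would be a genuine addition rather than a reconstruction.

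On the merits of your argument: the upper bound via a spanning tree is clean and correct, and your proof that equality holds when $G$ is a tree is also correct --- each of the three pairwise ``must be different'' claims (edge--edge, non-leaf--non-leaf, edge--non-leaf) is justified by exhibiting a single tree path that forces the two objects to receive distinct colors.

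The genuine gap is the reverse implication, and you explicitly leave it open. Your diagnosis is accurate: the spanning-tree-with-more-leaves idea fails exactly when every vertex of some $2$-connected block is a cut vertex of $G$ (your triangle-with-three-pendants example is the minimal instance), so a coloring argument is unavoidable. But you stop at ``reuse one color on a cycle edge or internal cycle vertex and re-route through the other arc,'' without saying which color to reuse or verifying that the re-routed path is itself total rainbow. That verification is the entire content of this direction. Concretely, in the triangle $abc$ with pendants $a',b',c'$ you need to produce an explicit total-coloring with at most $5$ colors and check all $\binom{6}{2}$ pairs; and then you need a general construction that handles an arbitrary cycle sitting inside an arbitrary block-cut structure. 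Until that case analysis is written out (or replaced by a uniform argument), the ``only if'' direction is not proved.
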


From Theorem \ref{thm 1} and \ref{thm 3}, one can see that $rc(G)$
and $rvc(G)$ are bounded by a function of the minimum degree
$\delta$, and that when $\delta$ is in linear with $n$, then both
$rc(G)$ and $rvc(G)$ are some constants. In this paper, we will use
the same idea in \cite{9} to obtain upper bounds for the number
$trc(G)$, which are also functions of $\delta$ and imply that when
$\delta$ is in linear with $n$, then $trc(G)$ is a constant.

\section{Main results}

Let $G$ be a connected graph on $n$ vertices with minimum degree
$\delta$. Denote by $Leaf(G)$ the maximum number of leaves in any
spanning tree of $G$. If $\delta=3$, then $Leaf(G)\geq n/4+2$ which
was proved by Linial and Sturtevant (unpublished). In \cite{6, 7},
it was proved that $Leaf(G)\geq 2n/5+8/5$ for $\delta=4$. Moreover,
Griggs and Wu \cite{6} showed that if $\delta=5$, then $Leaf(G)\geq
n/2+2$. For sufficiently large $\delta$,
$Leaf(G)\geq(1-b\ln\delta/\delta)n$, where $b$ is any constant
exceeding 2.5, which was proved in \cite{7}. Thus, we can get the
following results.
\begin{thm}\label{thm 4} For a connected graph $G$ of order $n$ with
minimum degree $\delta$, $trc(G)\leq 7n/4-3$ for $\delta=3$,
$trc(G)\leq8n/5-13/5$ for $\delta=4$ and $trc(G)\leq3n/2-3$
for $\delta=5$. For sufficiently large $\delta$,
$trc(G)\leq(1+b\ln\delta/{\delta})n-1$, where $b$ is any
constant exceeding 2.5.
\end{thm}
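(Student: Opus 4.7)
The plan is to reduce the problem directly to Proposition~\ref{prop 1} by passing to a spanning tree of $G$ that has as many leaves as possible, and then plug in the Leaf bounds recalled in the paragraph preceding the theorem. The key preliminary observation is that, for any spanning tree $T$ of $G$, we have $trc(G)\le trc(T)$: given an optimal total rainbow coloring of $T$ with $trc(T)$ colors, extend it to $G$ by assigning any color already used (say color $1$) to every edge of $G\setminus T$. Because each pair of vertices $u,v$ is joined by a unique path in $T$, and this path remains a total rainbow path in the extended coloring (its edges and internal vertices inherit their colors from $T$), the resulting total coloring of $G$ is already total rainbow connected.

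Now let $T$ be a spanning tree realising $Leaf(G)=L$ leaves, so that $T$ has exactly $n-L$ non-leaf vertices, i.e.\ $n-L$ vertices of degree at least $2$ in $T$. Applying Proposition~\ref{prop 1} to $T$ yields
\[
trc(T)\le (n-1)+(n-L)=2n-1-L,
\]
and combining this with $trc(G)\le trc(T)$ gives the master inequality
\[
trc(G)\le 2n-1-Leaf(G).
\]

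It then suffices to substitute the four Leaf bounds listed in the text: $Leaf(G)\ge n/4+2$ for $\delta=3$ (Linial--Sturtevant), $Leaf(G)\ge 2n/5+8/5$ for $\delta=4$ (cf.\ \cite{6,7}), $Leaf(G)\ge n/2+2$ for $\delta=5$ (Griggs--Wu \cite{6}), and $Leaf(G)\ge(1-b\ln\delta/\delta)n$ for sufficiently large $\delta$ (cf.\ \cite{7}). Each of the four claimed bounds then falls out of $trc(G)\le 2n-1-Leaf(G)$ by a one-line arithmetic rearrangement. I do not expect any genuine obstacle: all of the combinatorial difficulty has already been packaged into Proposition~\ref{prop 1} and into the cited maximum-leaf spanning-tree theorems, and the only point requiring care is the short argument that extending a total rainbow coloring of $T$ to all of $G$ preserves the total rainbow property, which is immediate from the uniqueness of paths in a tree.
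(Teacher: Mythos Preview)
Your proposal is correct and follows essentially the same approach as the paper: pick a spanning tree $T$ maximizing the number of leaves $\ell$, observe that $trc(G)\le 2n-1-\ell$, and substitute the cited leaf bounds. The only difference is presentational---the paper directly describes the coloring (distinct colors on the $n-1$ edges and $n-\ell$ non-leaf vertices of $T$), while you factor the same bound through Proposition~\ref{prop 1} together with the observation $trc(G)\le trc(T)$.
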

\begin{proof} We can choose a spanning tree $T$ with the most leaves.
Denote $\ell$ the maximum number of leaves. Then color all non-leaf
vertices and all edges of $T$ with $2n-\ell-1$ colors, each
receiving a distinct color. Hence, $trc(G)\leq 2n-\ell-1$.
\end{proof}
\begin{thm}\label{thm 5} For a connected graph $G$ of order
$n$ with minimum degree $\delta$, $trc(G)\leq6n/{(\delta+1)}+28$ for
$\delta\geq\sqrt{n-2}-1$ and $n\geq 291$, while
$trc(G)\leq7n/{(\delta+1)}+32$ for $16\leq\delta\leq\sqrt{n-2}-2$
and $trc(G)\leq7n/{(\delta+1)}+4C(\delta)+12$ for
$6\leq\delta\leq15$, where
$C(\delta)=e^{\frac{3\log({\delta}^3+2{\delta}^2+3)-3(\log3-1)}{\delta-3}}-2$.
\qed\end{thm}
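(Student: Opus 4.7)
The plan is to adapt, essentially by doubling, the $2$-step dominating set argument used to prove Theorem \ref{thm 3} in \cite{9}. In the rainbow vertex-connection setting, only the internal vertices of connecting paths need distinct colors, so a small connected dominating set $D'$ already suffices. In the total rainbow setting both those vertices and the edges traversed must receive colors, which roughly doubles the palette. The three $\delta$-regimes in the statement match three different size bounds on the connected $2$-step dominating set that is available for $G$.

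The first step is to locate a connected $2$-step dominating set $D\subseteq V(G)$ by the constructions of \cite{4, 9}. For $\delta\ge\sqrt{n-2}-1$ with $n\ge 291$ one obtains $|D|\le 3n/(\delta+1)+O(1)$ by exploiting that the $2$-neighborhood of any vertex is already almost all of $V(G)$; for $16\le\delta\le\sqrt{n-2}-2$ the same growth rate is preserved via the standard iterated greedy argument; and for $6\le\delta\le 15$ the extra factor $C(\delta)$ enters through a density-increment recursion that controls the additional colors needed per iteration. Having fixed $D$, I would enlarge it to a connected dominating set $D'$ by adjoining, for each vertex at distance exactly $2$ from $D$, a single common neighbor with $D$; careful counting as in \cite{4} keeps $|D'|$ within the same asymptotic bound up to an additive constant.

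Next, form the spanning subgraph $H=T\cup F$, where $T$ is a spanning tree of $G[D']$ and $F$ consists of one chosen edge from each $w\notin D'$ to a neighbor in $D'$. The total rainbow coloring splits into two disjoint palettes: give every edge of $T$ a distinct color and every internal vertex of $T$ a distinct color, using at most $2|D'|-3$ colors in all; then properly edge-color $F$ with $O(1)$ further colors so that two ears sharing a $D'$-endpoint receive different colors, while leaving vertices outside $D'$ uncolored. Any pair $u,v\in V(G)$ is joined by the canonical path $u\to u'\to\cdots\to v'\to v$, where $u',v'\in D'$ are the chosen $F$-neighbors and the middle segment is the $u'$-$v'$ path in $T$. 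Distinctness of tree colors and properness of the coloring of $F$, together with $u,v$ appearing only as endpoints, make this path total rainbow.

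The main obstacle will be the quantitative bookkeeping: deriving the correct bound on $|D'|$ in each regime of $\delta$ --- in particular the precise appearance of $C(\delta)$ when $6\le\delta\le 15$ --- and then packaging the ear colors, the rounding incurred when passing from $|D|$ to $|D'|$, and the diameter-plus-constant factor into the specific additive terms $28$, $32$, and $4C(\delta)+12$. A subsidiary point is to verify that the coloring remains valid when two $F$-edges of a single canonical path meet at a common $D'$-vertex; this is handled by the properness of the edge coloring of $F$ together with the palette of $F$ being disjoint from that of $T$.
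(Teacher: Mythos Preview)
Your plan diverges from the paper's argument at the central step, and the divergence is a genuine gap rather than an alternative route.

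The paper does \emph{not} enlarge the connected $\delta/3$-strong $2$-step dominating set $S$ (of size at most $3n/(\delta+1)-2$, from Lemma~\ref{lem2}) to a connected dominating set. It keeps $S$ as is, colors a spanning tree of $G[S]$ totally rainbow with $2|S|-1$ colors, and then handles $N^{1}(S)$ and $N^{2}(S)$ by a structural partition $N^{1}(S)=X\cup Y\cup Z$, $N^{2}(S)=A\cup B\cup C$, followed by a \emph{random} $7$-coloring (respectively $(C(\delta)+2)$-coloring) of the vertices of $X$ and of $Y$, whose success is certified by the Lov\'asz Local Lemma (Lemma~\ref{lem3}). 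The three $\delta$-regimes do \emph{not} correspond to three different bounds on $|S|$; the bound $|S|\le 3n/(\delta+1)-2$ is used uniformly. The range $\delta\ge\sqrt{n-2}-1$ is exactly where the LLL hypothesis $e\cdot 7^{-\lceil\delta/3\rceil+1}\bigl(((\delta+1)^{2}-1)\lceil\delta/3\rceil+1\bigr)<1$ holds. For smaller $\delta$ one first strips off a set $X_{1}\cup Y_{1}\subseteq N^{1}(S)$ of vertices with many neighbors in $N^{2}(S)$, each receiving its own color; the edge-count of Lemma~\ref{lem1} gives $|X_{1}\cup Y_{1}|\le n/(\delta+1)$, and this extra $n/(\delta+1)$ is the source of the coefficient $7$ in the second and third bounds. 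The function $C(\delta)$ is not produced by a ``density-increment recursion'': it is the number of random colors needed to make the LLL inequality hold when $6\le\delta\le 15$, and one checks $C(\delta)=5$ suffices for $\delta\ge 16$, whence $4C(\delta)+12=32$.

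Your two replacement steps both fail quantitatively. First, enlarging $D$ to a dominating set $D'$ by ``adjoining, for each vertex at distance exactly $2$ from $D$, a single common neighbor with $D$'' can add up to $|N^{2}(D)|$ new vertices, and nothing in \cite{4} bounds this by $O(n/(\delta+1))$; the connected dominating set there is built by a separate greedy process, not by post hoc augmentation of a $2$-step dominating set. Second, a proper edge-coloring of your ear set $F$ requires $\Delta(F)$ colors, not $O(1)$, since a single vertex of $D'$ may be the chosen neighbor of arbitrarily many outside vertices; and even granting it, properness only separates ears that share an endpoint, whereas your canonical path $u\to u'\to\cdots\to v'\to v$ also needs $c(uu')\ne c(v'v)$ when $u'\ne v'$. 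The paper avoids both problems by never enlarging $S$ and by exploiting the $\delta/3$-strong property: every vertex of $B$ has at least $\delta/3$ candidate ears into $X$, so a bounded random palette (validated by the Local Lemma) guarantees two of them receive distinct colors.
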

\begin{remark}\label{remark 1} The same example mentioned in \cite{3}
can show that our bound is tight up to additive factors when
$\delta\geq\sqrt{n-2}-1$.
\end{remark}

In order to prove Theorem \ref{thm 5}, we need some lemmas.
\begin{lem}\label{lem1}\cite{8} If $G$ is a connected graph
of order $n$ with minimum degree $\delta$, then it has a connected
spanning subgraph with minimum degree $\delta$ and with less than
$n(\delta+1/(\delta+1))$ edges.
\end{lem}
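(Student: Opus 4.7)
The plan is very direct: take any spanning tree $T$ of $G$ (which contributes $n-1$ edges and already guarantees connectivity) and augment $T$ with as few extra edges of $G$ as possible to raise every vertex's degree to at least $\delta$. Any edge added to $T$ belongs to $G$, so connectivity is preserved throughout the procedure, and the minimum-degree target is met by construction at the end. The entire task therefore reduces to bounding the number of augmenting edges needed.

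For each vertex $v$, define its deficit relative to $T$ by $f(v)=\max\{0,\delta-d_T(v)\}$. Since $T$ is spanning we have $d_T(v)\ge 1$ for all $v$, so $f(v)\le \delta-1$, yielding the crude but sufficient estimate $\sum_v f(v)\le n(\delta-1)$. I would then process deficit vertices greedily: while some vertex $v$ still has $f(v)>0$, the hypothesis $d_G(v)\ge\delta>d_T(v)$ guarantees at least one edge $vu\in E(G)\setminus E(T)$ incident to $v$, and I add such an edge to the current subgraph. Each such addition raises $d(v)$ (and possibly $d(u)$) by one, so the total remaining deficit drops by at least one per step; hence the number of augmenting edges is at most $\sum_v f(v)\le n(\delta-1)$.

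Combining the two stages, the constructed connected spanning subgraph $H$ has minimum degree $\delta$ and satisfies
\[
|E(H)|\le (n-1)+n(\delta-1)=n\delta-1 < n\delta+\tfrac{n}{\delta+1}=n\bigl(\delta+\tfrac{1}{\delta+1}\bigr),
\]
which is the claimed bound. The only potential obstacle I anticipate is checking at each greedy step that a suitable non-tree edge incident to a current deficit vertex genuinely exists, but this is immediate: if $v$ is still a deficit vertex then its current degree in $H$ is strictly less than $\delta\le d_G(v)$, so at least one edge of $G$ incident to $v$ has not yet been included. No more delicate accounting (such as pairing deficit endpoints or invoking dominating-set averaging) is required for this relatively loose bound; the strength of the lemma lies not in tightness of the constant but in the explicit existence of a sparse connected spanning subgraph preserving the minimum-degree condition.
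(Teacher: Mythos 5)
Your proof is correct. The paper itself gives no proof of this lemma---it is quoted from Krivelevich and Yuster \cite{8}---so there is no internal argument to compare against; your spanning-tree-plus-greedy-augmentation construction is the standard route and every step checks out: the tree guarantees connectivity of the final subgraph, each augmenting edge exists because a deficient vertex has current degree below $\delta\le d_G(v)$, each addition strictly decreases the total deficit, and the initial deficit is at most $n(\delta-1)$ since every vertex of a spanning tree has degree at least one. Your count even yields $|E(H)|\le (n-1)+n(\delta-1)=n\delta-1$, which is slightly stronger than the stated bound $n\bigl(\delta+\tfrac{1}{\delta+1}\bigr)=n\delta+\tfrac{n}{\delta+1}$, so the lemma follows.
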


Given a graph $G$, a set $D\subseteq V(G)$ is called a {\it $2$-step
dominating set} of $G$ if every vertex of $G$ which is not dominated
by $D$ has a neighbor that is dominated by $D$. A $2$-step
dominating set $S$ is {\it $k$-strong} if every vertex which is not
dominated by $S$ has at least $k$ neighbors that are dominated by
$S$. If $S$ induces a connected subgraph of $G$, then $S$ is called
a {\it connected $k$-strong $2$-step dominating set}. These concepts
can be found in \cite{8}.
\begin{lem}\label{lem2}\cite{9} If $G$ is a connected graph of order
$n$ with minimum degree $\delta\geq2$, then $G$ has a connected
$\delta/3$-strong 2-step dominating set $S$ whose size is at most
$3n/(\delta+1)-2$.
\end{lem}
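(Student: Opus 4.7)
The plan is to build the required set $S$ by a greedy breadth-first style construction, taking advantage of the minimum-degree hypothesis to keep the closed neighborhoods of successive ``centers'' disjoint, and then to verify strongness by a local argument.

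\textbf{Construction.} Fix an arbitrary $v_0\in V(G)$ and initialize $D_0=T_0=\{v_0\}$. While $V(G)\setminus N_2[D_i]\neq\emptyset$, pick a vertex $u_i$ in that set with $d(u_i,D_i)=3$; such a $u_i$ exists because $G$ is connected and the distance function jumps by at most one. Fix a shortest path $d_i$-$x_i$-$y_i$-$u_i$ with $d_i\in D_i$ and set
\[
D_{i+1}=D_i\cup\{u_i\},\qquad T_{i+1}=T_i\cup\{x_i,y_i,u_i\}.
\]
Terminate at the first stage $r$ with $N_2[D_r]=V(G)$, and declare $S=T_r$, $D=D_r$. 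The set $S$ is connected because each stage glues a new path onto the existing tree through $d_i\in D_i\subseteq T_i$, and $D$ is a $2$-step dominating set of $G$ by the termination rule.

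\textbf{Size bound.} When $u_i$ is inserted into $D$ it has distance $\geq 3$ from every earlier center, so the closed balls $\{N[d]:d\in D\}$ are pairwise disjoint; otherwise a common neighbor would force two centers to lie within distance $2$. Since $|N[d]|\geq\delta+1$, we obtain $|D|(\delta+1)\leq n$, so $|D|\leq n/(\delta+1)$. Each iteration adds exactly $3$ new vertices to $T$, whence
\[
|S|=|T_r|=1+3(|D|-1)=3|D|-2\leq\frac{3n}{\delta+1}-2.
\]

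\textbf{The $\delta/3$-strong property.} This is the delicate part. Let $u\in V(G)\setminus N[S]$; since $S$ is $2$-step dominating, $d(u,D)=2$ and every neighbor of $u$ lies at distance $1$ or $2$ from $D$. Consider the stage $i$ at which $u$ first joined $N_2[D_i]$: then $d(u,u_{i-1})\in\{1,2\}$, and if it were $1$ we would have $u\in N[D]\subseteq N[S]$, a contradiction. So $d(u,u_{i-1})=2$, witnessed by at least one neighbor of $u$ lying in $N(u_{i-1})\subseteq N[S]$. I plan to boost this from one to $\delta/3$ by a contradiction argument: assuming that strictly fewer than $\delta/3$ neighbors of $u$ lie in $N[S]$, more than $2\delta/3$ lie at distance exactly $2$ from $D$; these neighbors together with $u$ itself form a tight cluster whose common ``feeders'' into $N(D)$ contradict either the disjointness of the closed balls $N[d]$ or the choice of $u_{i-1}$ at distance $3$ during the greedy step that first covered $u$. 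If this pure-greedy count falls short, the construction can be augmented by adding, for each deficient $u$, a bounded number of its outer neighbors to $T$, with the cost absorbed by unused slack in the disjoint-ball inequality $|D|(\delta+1)\leq n$.

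\textbf{Main obstacle.} The size bound and connectedness fall out cleanly from the greedy step; the real work lies in the final paragraph. The challenge is to turn the ``at least one neighbor in $N[S]$'' observation into ``at least $\delta/3$ neighbors in $N[S]$'' while keeping the multiplicative constant $3$ in front of $n/(\delta+1)$ intact, rather than inflating to $4$ or higher. This requires carefully choosing the path $d_i$-$x_i$-$y_i$-$u_i$ so that the intermediate vertices $x_i,y_i$ contribute enough domination of the distance-$2$ layer around each new center, and exploiting the disjointness of the $N[d]$'s to rule out ``thin'' connections between $u$ and $D$.
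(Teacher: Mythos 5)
Your construction, connectivity argument, and size bound are all fine, but the $\delta/3$-strong property --- which you yourself flag as ``the delicate part'' --- is left unproven, and your primary plan for it cannot succeed: there is simply no contradiction to be extracted from the pure greedy packing. A vertex $u$ at distance $2$ from $D$ having fewer than $\delta/3$ dominated neighbours is perfectly consistent with the disjointness of the balls $N[d]$ and with the choice of the $u_i$'s. Concretely, let $D=\{v\}$, let $W=N(v)$ be a clique on $\delta$ vertices, and let $K$ be a clique on $\delta$ vertices (one of them called $u$) each of whose members is joined to a single $w_1\in W$ and to nothing else outside $K$. This graph is connected with minimum degree $\delta$, your process started at $v_0=v$ terminates immediately with $S=D=\{v\}$, and $u$ has exactly one neighbour dominated by $S$; so for $\delta\ge 4$ the output is not $(\delta/3)$-strong. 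The greedy packing alone genuinely fails, so no amount of local analysis of the paths $d_i$-$x_i$-$y_i$-$u_i$ can close the gap.

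The fallback you mention in one sentence (augmenting the construction and absorbing the cost in the slack of $|D|(\delta+1)\le n$) is in fact the heart of the proof cited from [9] (following Krivelevich--Yuster and Chandran et al.), and it is not a routine patch. One runs a second phase: while some vertex $u$ at distance exactly $2$ from the current set $S$ has fewer than $\delta/3$ neighbours in $N[S]$, adjoin $u$ itself (not its outer neighbours) to $S$ together with one connector vertex on a path of length $2$ from $S$ to $u$. Such a $u$ has more than $2\delta/3$ neighbours at distance exactly $2$ from $S$, so each such step adds $2$ vertices to $S$ while moving at least $2\delta/3+1$ vertices of $V\setminus N[D]$ into $N[S]$, and each vertex is moved at most once. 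Since $|V\setminus N[D]|\le n-|D|(\delta+1)$ and $2/(2\delta/3+1)<3/(\delta+1)$, the total size is at most $3|D|-2+2\bigl(n-|D|(\delta+1)\bigr)/(2\delta/3+1)\le 3n/(\delta+1)-2$. Without this second phase and its amortized accounting, what you have proved is only the existence of a connected $1$-strong two-step dominating set of the stated size, which is strictly weaker than the lemma and insufficient for the Lov\'asz Local Lemma step in the proof of Theorem 5.
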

\begin{lem}\label{lem3}\cite{1} ($Lov\acute{a}$sz Local Lemma)
Let $A_{1},A_{2},...,A_{n}$ be the events in an arbitrary
probability space. Suppose that each event $A_{i}$ is mutually
independent of a set of all the other events $A_{j}$ but at most
$d$, and that $P[A_{i}]\leq p$ for all $1\leq i\leq n$. If
$ep(d+1)<1$, then $Pr[\bigwedge_{i=1}^{n}\bar{A_{i}}]>0$.\qed
\end{lem}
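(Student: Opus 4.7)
The plan is to establish the general (asymmetric) form of the Lovász Local Lemma by induction, and then derive the symmetric version stated in Lemma~\ref{lem3} by a suitable choice of parameters. Specifically, I would introduce auxiliary reals $x_1,\ldots,x_n\in[0,1)$ satisfying $P[A_i]\le x_i\prod_{j\in N(i)}(1-x_j)$, where $N(i)$ is the set of indices of events on which $A_i$ is not mutually independent, and prove by induction on $|S|$ the following key claim: for every $i\notin S$,
$$\Pr\!\left[A_i \,\Big|\, \bigwedge_{j\in S}\overline{A_j}\right] \;\le\; x_i.$$

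The base case $S=\emptyset$ is immediate from the hypothesis. For the inductive step, I would partition $S=S_1\cup S_2$ with $S_1=S\cap N(i)$ and $S_2=S\setminus N(i)$. If $S_1=\emptyset$, mutual independence gives $\Pr[A_i\mid\bigwedge_{S_2}\overline{A_j}]=P[A_i]\le x_i$ directly. Otherwise, I would write
$$\Pr\!\left[A_i \,\Big|\, \bigwedge_{S}\overline{A_j}\right] \;=\; \frac{\Pr\!\left[A_i\wedge\bigwedge_{S_1}\overline{A_j}\mid\bigwedge_{S_2}\overline{A_j}\right]}{\Pr\!\left[\bigwedge_{S_1}\overline{A_j}\mid\bigwedge_{S_2}\overline{A_j}\right]},$$
bound the numerator from above by $\Pr[A_i\mid\bigwedge_{S_2}\overline{A_j}]=P[A_i]\le x_i\prod_{j\in N(i)}(1-x_j)$ (again by mutual independence), and bound the denominator from below by $\prod_{j\in S_1}(1-x_j)$ using the chain rule combined with the induction hypothesis (each new factor is at least $1-x_{j_k}$). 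Since $S_1\subseteq N(i)$, cancellation yields $\le x_i\prod_{j\in N(i)\setminus S_1}(1-x_j)\le x_i$, completing the induction.

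From the claim, I would derive the conclusion by the chain rule,
$$\Pr\!\left[\bigwedge_{i=1}^{n}\overline{A_i}\right] \;=\; \prod_{i=1}^{n}\Pr\!\left[\overline{A_i}\,\Big|\,\bigwedge_{j<i}\overline{A_j}\right] \;\ge\; \prod_{i=1}^{n}(1-x_i) \;>\; 0.$$
To recover the symmetric statement, I would take $x_i=1/(d+1)$ uniformly. The hypothesis to be verified reduces to $p\le \tfrac{1}{d+1}\bigl(1-\tfrac{1}{d+1}\bigr)^d$, and since $\bigl(d/(d+1)\bigr)^d>1/e$ for all $d\ge 1$, the assumption $ep(d+1)<1$ guarantees it, so the conclusion $\Pr[\bigwedge\overline{A_i}]>0$ follows.

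The main obstacle is the inductive step: the conditional probability must be split so that the numerator is controlled by mutual independence (which requires isolating the events in $S_2$) while the denominator is controlled by the induction hypothesis applied one event at a time (which requires isolating $S_1$). The cancellation then hinges on the inclusion $S_1\subseteq N(i)$, which is precisely why the partition along the dependency neighborhood is the right one. Once this technical step is in place, the passage from the asymmetric to the symmetric form is a short calculation using the standard estimate $(1-1/(d+1))^d>1/e$.
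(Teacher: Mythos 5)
The paper does not prove this lemma at all---it is quoted from reference \cite{1} (Alon and Spencer) and stamped with an immediate \qed---so the only meaningful comparison is with the standard proof in that reference, and your argument is exactly that proof: the asymmetric Local Lemma established by induction on $|S|$ via the split $S=S_1\cup S_2$ along the dependency neighborhood, followed by the specialization $x_i=1/(d+1)$ and the estimate $\bigl(d/(d+1)\bigr)^d>1/e$. Your proposal is correct and complete in all essential steps (the only cosmetic omission is the trivial case $d=0$, where independence gives the conclusion directly), so it faithfully reconstructs the proof the paper delegates to its citation.
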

\begin{figure}
\centering
\subfigure[]{
\label{Fig.sub.1}
\includegraphics[width=0.4\textwidth]{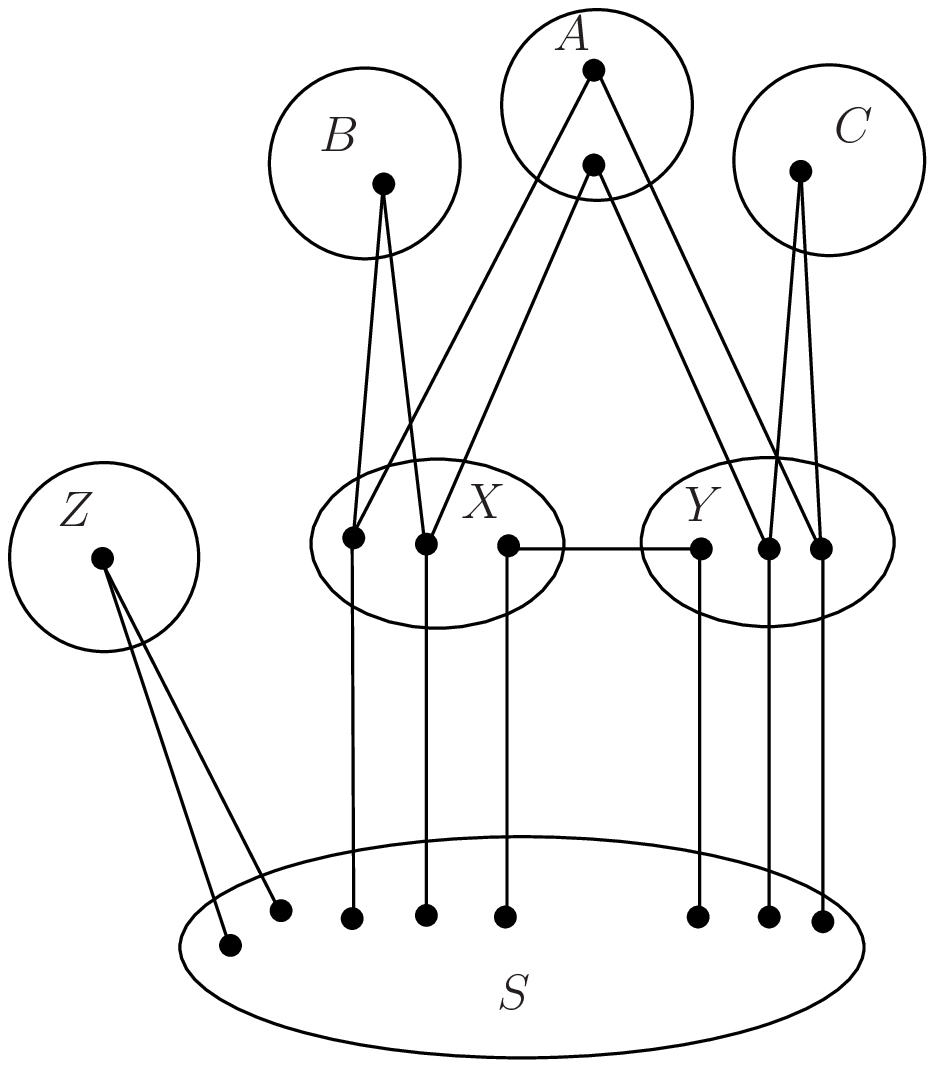}}
\subfigure[]{
\label{Fig.sub.2}
\includegraphics[width=0.45\textwidth]{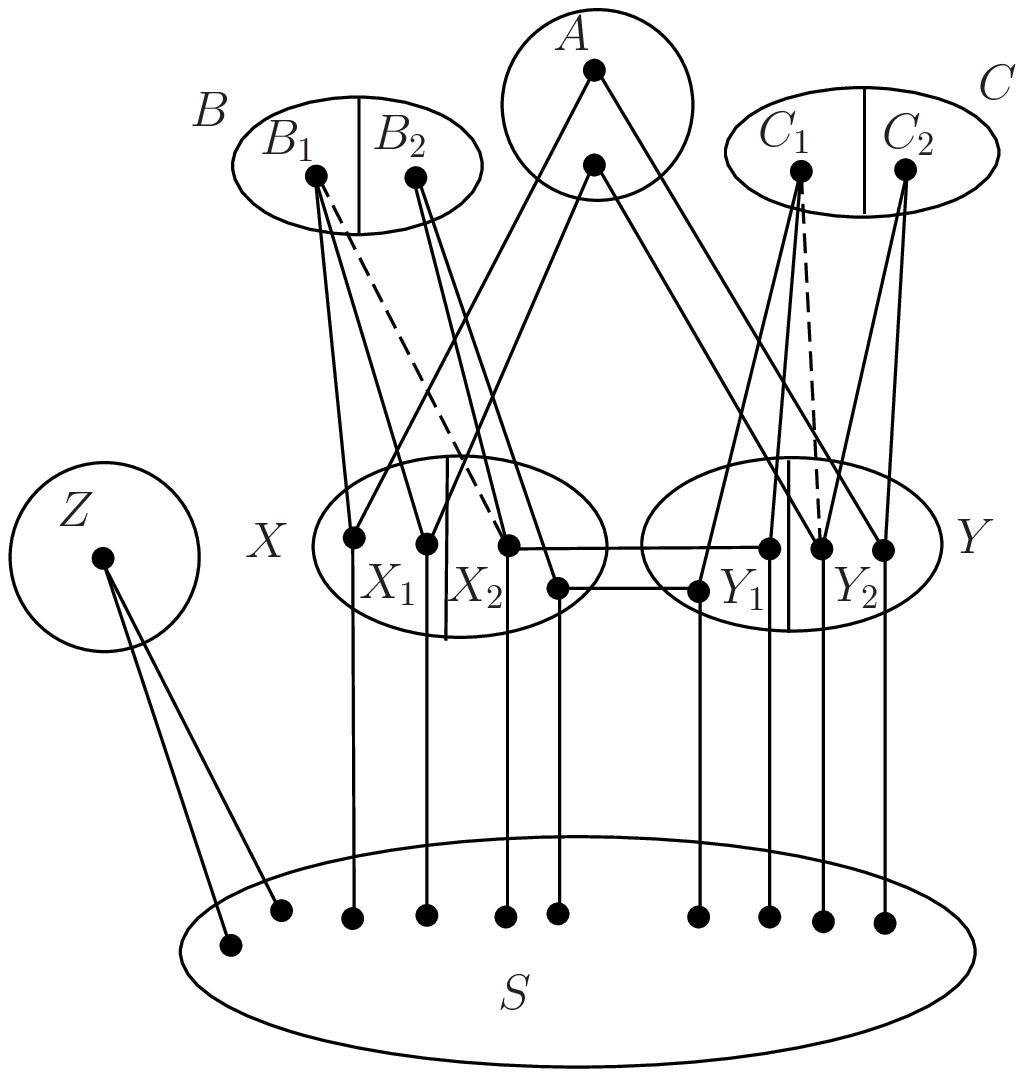}}
\caption{Illustration in the proof of Theorem 5 }\label{Fig.lable}
\end{figure}

Now we are arriving at the point to give a proof for Theorem 5.

\noindent {\bf Proof of Theorem 5:} The proof goes similarly for the
main result of \cite{9}. We are given a connected graph $G$ of order
$n$ with minimum degree $\delta$. Suppose that $G$ has less than
$n(\delta+1/(\delta+1))$ edges by Lemma \ref{lem1}. Let $S$ denote a
connected $\delta/3$-strong $2$-step dominating set of $G$. Then, we
have $|S|\leq 3n/(\delta+1)-2$ by Lemma \ref{lem2}. Let $N^{k}(S)$
denote the set of all vertices at distance exactly $k$ from $S$. We
give a partition to $N^{1}(S)$ as follows. First, let $H$ be a new
graph constructed on $N^{1}(S)$ with edge set $E(H)=\{uv: u,v\in
N^{1}(S),uv\in E(G)$ or $\exists\ w\in N^{2}(S)$ such that $uwv$ is
a path of $G\}$. Let $Z$ be the set of all isolated vertices of $H$.
Moreover, there exists a spanning forest $F$ of $V(H)\backslash Z$.
Finally, choose a bipartition defined by this forest, denoted by $X$
and $Y$. Partition $N^{2}(S)$ into three subsets: $A=\{u\in
N^{2}(S):u\in N(X)\cap N(Y)\}$, $B=\{u\in N^{2}(S):u\in
N(X)\backslash N(Y)\}$ and $C=\{u\in N^{2}(S): u\in N(Y)\backslash
N(X)\}$; see Figure 1(a).

\noindent {\bf Case 1.} $\delta\geq\sqrt{n-2}-1$.

Next we give a coloring to the edges and vertices of $G$. Let
$k=2|S|-1$ and $T$ be a spanning tree of $G[S]$. Color the edges and
vertices of $T$ with $k$ distinct colors such that $G[S]$ is total
rainbow connected. Assign every $[X, S]$ edge with color $k+1$,
every $[Y,S]$ edge with color $k+2$ and every edge in $N^{1}(S)$
with color $k+3$. Since the minimum degree $\delta\geq2$, every
vertex in $Z$ has at least two neighbors in $S$. Color one edge with
$k+1$ and all others with $k+2$. Assign every $[A,X]$ edge with
color $k+3$, every $[A, Y]$ edge with color $k+4$ and every vertex
of $A$ with color $k+5$. We assign seven new colors from
$\{i_{1},i_{2},...,i_{7}\}$ to the vertices of $X$ such that each
vertex of $X$ chooses its color randomly and independently from all
other vertices of $X$. Similarly, we assign another seven colors to
the vertices of $Y$. Assign seven colors from
$\{j_{1},j_{2},...,j_{7}\}$ to the edges between $B$ and $X$ as
follows: for every vertex $u\in B$, let $N_{X}(u)$ denote the set of
all neighbors of $u$ in $X$; for every vertex $u'\in N_{X}(u)$, if
we color $u'$ with $i_{t}\ (t\in\{1,2,...,7\})$, then color $uu'$
with $j_{t}$. In a similar way, we assign seven new colors to the
edges between $C$ and $Y$. All other edges and vertices of $G$ are
uncolored. Thus, the number of all colors we used is
$$k+33=2|S|-1+33\leq2(\frac{3n}{\delta+1}-2)-1+33=\frac{6n}{\delta+1}+28.$$

We have the following claim for any $u\in B\ (C)$.
\begin{claim}\label{claim1} For any $u\in B\ (C)$, we have a coloring
for the vertices in $X\ (Y)$ with seven colors such that there exist
two neighbors $u_{1}$ and $u_{2}$ in $N_{X}(u)\ (N_{Y}(u))$ that
receive different colors. Hence, the edges $uu_{1}$ and $uu_{2}$ are
also colored differently.
\end{claim}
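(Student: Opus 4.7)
The plan is to apply the Lovász Local Lemma (Lemma \ref{lem3}) to the random coloring of $X$ already chosen: each vertex of $X$ takes one of the seven colors $\{i_1,\dots,i_7\}$ uniformly and independently, and symmetrically for $Y$. I will treat $B$ and $X$; the case of $C$ and $Y$ is symmetric and uses disjoint random variables, so both can be handled in parallel. For each $u \in B$, let $A_u$ be the bad event that every vertex of $N_X(u)$ receives the same color; the claim will follow once I show $\Pr\!\big[\bigwedge_u \overline{A_u}\big]>0$.

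The first ingredient I would establish is the structural bound $|N_X(u)| \ge \delta/3$ for every $u \in B$. By the $\delta/3$-strong $2$-step domination of $S$ (Lemma \ref{lem2}), $u$ has at least $\delta/3$ neighbors in $N^1(S)=X\cup Y\cup Z$; the definition $B=N(X)\setminus N(Y)$ rules out $Y$-neighbors. If $u$ had a neighbor $z\in Z$ together with any other neighbor $v\in N^1(S)$, the path $z\,u\,v$ (with $u\in N^2(S)$) would create the $H$-edge $zv$, contradicting the isolation of $z$ in $H$. Since $\delta\ge 6$ throughout Theorem \ref{thm 5} gives $\delta/3\ge 2$, this is impossible, so all at-least-$\delta/3$ neighbors of $u$ in $N^1(S)$ must lie in $X$.

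The probability estimate is then $\Pr[A_u]\le 7\cdot 7^{-|N_X(u)|}\le 7^{1-\delta/3}=:p$, and $A_u$ is mutually independent of every $A_{u'}$ with $N_X(u)\cap N_X(u')=\emptyset$. To control the dependency $d$, I would invoke Lemma \ref{lem1} to assume $G$ is its sparse spanning subgraph with fewer than $n(\delta+1/(\delta+1))$ edges; combined with minimum degree $\delta$, a pigeonhole argument on the excess degrees gives $\Delta(G)\le\delta+2n/(\delta+1)$. The Case 1 hypothesis $\delta\ge\sqrt{n-2}-1$ forces $n\le(\delta+1)^2+2$, so $\Delta(G)\le 3\delta+O(1)$, and therefore $d\le|N_X(u)|\,(\Delta-1)\le 3\delta^2+O(\delta)$.

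Finally, $e\,p\,(d+1)\le e\cdot 7^{1-\delta/3}\cdot\bigl(3\delta^2+O(\delta)\bigr)$, and since $n\ge 291$ forces $\delta\ge 16$ the exponential $7^{\delta/3}$ comfortably outstrips the polynomial $7e(3\delta^2+O(\delta))$; LLL then yields with positive probability a coloring in which no $A_u$ occurs, so for each $u\in B$ there are $u_1,u_2\in N_X(u)$ with different colors, and consequently the edges $uu_1,uu_2$ receive distinct colors from $\{j_1,\dots,j_7\}$. The chief obstacle is the dependency bound: a naive count gives $d=O(n\delta)$, which kills the calculation, so the essential trick is to combine the sparse subgraph from Lemma \ref{lem1} with the quadratic regime $n\le(\delta+1)^2+2$ forced by Case 1 in order to bring $d$ down to $O(\delta^2)$.
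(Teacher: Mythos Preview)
Your overall plan---random $7$-coloring of $X$ plus the Lov\'asz Local Lemma---is exactly the paper's, and your argument that every $u\in B$ has all of its $\ge\delta/3$ first-layer neighbors inside $X$ (ruling out $Y$ by definition and $Z$ via the would-be $H$-edge) is correct and in fact more carefully stated than the paper's. The genuine gap is your dependency bound. From Lemma~\ref{lem1} as written, $m<n(\delta+1/(\delta+1))$, the total excess degree is $2m-n\delta<n\delta+2n/(\delta+1)$, and pigeonhole lets a single vertex absorb all of it: one only gets $\Delta(G)\le \delta+n\delta+2n/(\delta+1)=\Theta(n\delta)$, not $\delta+2n/(\delta+1)$. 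With $\Delta$ uncontrolled, $|N_X(u)|$ is uncontrolled, and your estimate $d=O(\delta^2)$ collapses; indeed at $\delta=16$ the crude bound $d\le(n-1)^2$ gives $ep(d+1)\approx 13.6$, so LLL does not fire.

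The paper avoids Lemma~\ref{lem1} entirely in Case~1 by a small but essential device: replace the bad event ``$N_X(u)$ monochromatic'' by $B_u=\{X(u)\text{ monochromatic}\}$ for a \emph{fixed} subset $X(u)\subset N_X(u)$ of size exactly $\lceil\delta/3\rceil$. This keeps $\Pr[B_u]\le 7^{1-\lceil\delta/3\rceil}$ but makes the dependency $d\le\lceil\delta/3\rceil\cdot\max_{v\in X}|N_B(v)|$, with only $\lceil\delta/3\rceil$ (not $|N_X(u)|$) vertices to account for. For the second factor the paper uses the trivial observation that $v\in X$ already has one neighbor in $S$ and one in $Y\cup A$ (via the spanning forest $F$), so $|N_B(v)|\le n-3<(\delta+1)^2$ directly from the Case~1 hypothesis $n-2\le(\delta+1)^2$. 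This yields $d\le((\delta+1)^2-1)\lceil\delta/3\rceil$, and $e\cdot 7^{1-\lceil\delta/3\rceil}(d+1)<1$ holds for all $\delta\ge 16$. Fixing the small witness set $X(u)$ is precisely what decouples the dependency from $\Delta(G)$ and makes any appeal to the sparse subgraph unnecessary here.
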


Notice that for every vertex $v\in X$, $v$ has two neighbors in
$S\cup A\cup Y$. Moreover, $(\delta+1)^{2}\geq n-2$. Thus, $v$ has
less than $(\delta+1)^{2}$ neighbors in $B$. For every vertex $u\in
B$, $u$ has at least $\delta/3$ neighbors in $X$ since $S$ is a
connected $\delta/3$-strong $2$-step dominating set of $G$. Let
$A_{u}$ denote the event that $N_{X}(u)$ receives at least two
distinct colors. Fix a set $X(u)\subset N_{X}(u)$ with
$|X(u)|=\lceil\delta/3\rceil$. Let $B_{u}$ denote the event that all
vertices of $X(u)$ are colored the same. Hence, $Pr[B_{u}]\leq
7^{-\lceil\delta/3\rceil+1}$. Moreover, the event $B_{u}$ is
independent of all other events $B_{v}$ for $v\neq u$ but at most
$((\delta+1)^{2}-1)\lceil\delta/3\rceil$ of them. Since
$e\cdot7^{-\lceil\delta/3\rceil+1}(((\delta+1)^{2}-1)\lceil\delta/3\rceil+1)<1$,
for all $\delta\geq\sqrt{n-2}-1$ and $n\geq 291$, we have
$Pr[\bigwedge_{u\in B}\bar{B_{u}}]>0$ by Lemma \ref{lem3}.
Therefore, $Pr[A_{u}]>0$.

We will show that $G$ is total-rainbow connected. Take any two
vertices $u$ and $w$ in $V(G)$. If they are all in $S$, there is a
total rainbow path connecting them in $G[S]$. If one of them is in
$N^{1}(S)$, say $u$, then $u$ has a neighbor $u'$ in $S$. Thus,
$uu'Pw$ is a required path, where $P$ is a total rainbow path in
$G[S]$ connecting $u'$ and $w$. If one of them is in $X\cup Z$, say
$u$, and the other is in $Y\cup Z$, say $w$, then $u$ has a neighbor
$u'$ in $S$ and $w$ has a neighbor $w'$ in $S$. Hence, $uu'Pw'w$ is
a required path, where $P$ is a total rainbow path connecting $u'$
and $w'$ in $G[S]$. If they are all in $X$, then there exists a
$u'\in Y$ such that $u$ and $u'$ are connected by a single edge or a
total rainbow path of length two. We know that $u'$ and $w$ are
total-rainbow connected. Therefore, $u$ and $w$ are connected by a
total rainbow path. If one of them is in $A\cup B$, say $u$, and the
other is in $A\cup C$, say $w$, then $u$ has a neighbor $u'$ in $X$,
and $w$ has a neighbor $w'$ in $Y$. Thus, they are total-rainbow
connected. If they are all in $B$, by Claim 1 $u$ has two neighbors
$u_{1}$ and $u_{2}$ in $X$ such that $u_{1},\ u_{2},\ uu_{1}$ and
$uu_{2}$ are colored differently. Similarly, we also have that $w$
has two neighbors $w_{1}$ and $w_{2}$ in $X$ such that $w_{1},\
w_{2},\ ww_{1}$ and $ww_{2}$ are colored differently. Hence, $u$ and
$w$ are total-rainbow connected. We can check that $u$ and $w$ are
total-rainbow connected in all other cases.

\noindent {\bf Case 2.} $6\leq\delta\leq\sqrt{n-2}-2$.

We partition $X$ into two subsets $X_{1}$ and $X_{2}$. For any $u\in
X$, if $u$ has at least $(\delta+1)^2$ neighbors in $B$, then $u\in
X_{1}$; otherwise, $u\in X_{2}$. Similarly, we partition $Y$ onto
two subsets $Y_{1}$ and $Y_{2}$. Note that $|X_{1}\cup Y_{1}|\leq
n/(\delta+1)$ since $G$ has less than $n(1+1/(\delta+1))$ edges.
Partition $B$ into two subsets $B_{1}$ and $B_{2}$. For any $u\in
B$, if $u$ has at least one neighbor in $X_{1}$, then $u\in B_{1}$;
otherwise, $u\in B_{2}$. In a similar way, we partition $C$ into two
subsets $C_{1}$ and $C_{2}$; see Figure 1(b).

For $16\leq\delta\leq\sqrt{n-2}-2$, assume that $C(\delta)=5$; for
$6\leq\delta\leq 15$, assume that
$C(\delta)=e^{\frac{3\log({\delta}^3+2{\delta}^2+3)-3(\log3-1)}{\delta-3}}-2$.
Now we give a coloring to the edges and vertices of $G$. Let
$k=2|S|-1$ and $T$ be a spanning tree of $G[S]$. Color the edges and
vertices of $T$ with $k$ distinct colors. Assign every $[X,S]$ edge
with color $k+1$, every $[Y, S]$ edge with color $k+2$ and every
edge in $N^{1}(S)$ with color $k+3$. Since every vertex in $Z$ has
at least two neighbors in $S$, color one edge with $k+1$ and all
others with $k+2$. Assign every $[A, X]$ edge with color $k+3$,
every $[A,Y]$ edge with color $k+4$ and every vertex of $A$ with
color $k+5$. Assign distinct colors to each vertex of $X_{1}\cup
Y_{1}$ and $C(\delta)+2$ new colors from
$\{i_{1},i_{2},...,i_{C(\delta)+2}\}$ to the vertices of $X_{2}$
such that each vertex of $X_{2}$ chooses its color randomly and
independently from all other vertices of $X_{2}$. Similarly, we
assign $C(\delta)+2$ new colors to the vertices of $Y_{2}$. For
every vertex $v\in B_{1}$, if $v$ has at least two neighbors in
$X_{1}$, color one edge with $k+6$ and all others with $k+7$; if $v$
has only one neighbor in $X_{1}$, then it has another neighbor in
$X_{2}$ since $S$ is a connected $\delta/3$-strong $2$-step
dominating set. Thus, color the edge incident with $X_{1}$ with
$k+6$ and all edges incident with $X_{2}$ with $k+7$. We assign
$C(\delta)+2$ colors from $\{j_{1},j_{2},...,j_{C(\delta)+2}\}$ to
the edges between $B_{2}$ and $X_{2}$. For every vertex $u\in
B_{2}$, let $N_{X_{2}}(u)$ denote all the neighbors of $u$ in
$X_{2}$. For every vertex $u'\in N_{X_{2}}(u)$, if we color $u'$
with $i_{t}\ (t\in\{1,2,...,C(\delta)+2\})$, then color $uu'$ with
$j_{t}$. In a similar way, we assign another $C(\delta)+4$ colors to
the edges between $C$ and $Y$. All other edges and vertices of $G$
are uncolored. Hence, the number of all colors we used is
$$k+|X_{1}\cup Y_{1}|+4C(\delta)+17\leq
2(\frac{3n}{\delta+1}-2)-1+\frac{n}{\delta+1}+4C(\delta)+17
=\frac{7n}{\delta+1}+4C(\delta)+12.$$

We have the following claim for any $u\in B_{2}\ (C_{2})$.
\begin{claim}\label{claim2} For any $u\in B_{2}\ (C_{2})$,
we have a coloring for the vertices in $X_{2}\ (Y_{2})$ with
$C(\delta)+2$ colors such that there exist two neighbors $u_{1}$ and
$u_{2}$ in $N_{X_{2}}(u)\ (N_{Y_{2}}(u))$ that receive different
colors. Thus, the edges $uu_{1}$ and $uu_{2}$ are also colored
differently.
\end{claim}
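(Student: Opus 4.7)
The plan is to adapt the Lov\'asz Local Lemma argument used in Case~1 to the finer partition $(X_{2}, Y_{2}, B_{2}, C_{2})$ introduced in Case~2. I will sketch the argument for $u \in B_{2}$; the statement for $C_{2}$ follows symmetrically.

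First I would establish the key degree bound driving the argument. By the definition of $B_{2}$, a vertex $u \in B_{2}$ has no neighbors in $X_{1}$, so $N_{X}(u) \subseteq X_{2}$. Since $S$ is a connected $\delta/3$-strong $2$-step dominating set and $u \in N^{2}(S)$, the vertex $u$ has at least $\lceil \delta/3 \rceil$ neighbors in $X$, and hence $|N_{X_{2}}(u)| \geq \lceil \delta/3 \rceil$. For each such $u$ I fix an arbitrary subset $X_{2}(u) \subseteq N_{X_{2}}(u)$ of size exactly $\lceil \delta/3 \rceil$.

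Next, recall that each vertex of $X_{2}$ has been assigned one of $C(\delta)+2$ colors independently and uniformly at random. For each $u \in B_{2}$ let $B_{u}$ denote the event that all vertices of $X_{2}(u)$ receive the same color; then $\Pr[B_{u}] \leq (C(\delta)+2)^{-\lceil \delta/3 \rceil + 1}$. If $\bigwedge_{u} \bar{B_{u}}$ holds, then every $u \in B_{2}$ has two neighbors $u_{1}, u_{2} \in N_{X_{2}}(u)$ that receive distinct colors, and by the coloring rule $u' \mapsto i_{t} \Rightarrow uu' \mapsto j_{t}$ the incident edges $uu_{1}$ and $uu_{2}$ automatically inherit distinct colors, which is exactly what the claim demands. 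To invoke Lemma~\ref{lem3} I would bound dependencies: by the very definition of $X_{2}$, every vertex of $X_{2}$ has fewer than $(\delta+1)^{2}$ neighbors in $B$, so each $B_{u}$ is mutually independent of all but at most $\lceil \delta/3 \rceil\bigl((\delta+1)^{2}-1\bigr)$ other events. The LLL condition then reduces to
$$e \cdot (C(\delta)+2)^{-\lceil \delta/3 \rceil + 1} \cdot \Bigl( \lceil \delta/3 \rceil \bigl((\delta+1)^{2} - 1\bigr) + 1 \Bigr) < 1.$$

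The main obstacle is verifying this numerical inequality across the whole range $\delta \geq 6$ with the prescribed value of $C(\delta)$. For $16 \leq \delta \leq \sqrt{n-2}-2$, where $C(\delta)=5$ so that seven colors are used, the bound $(\delta+1)^{2} \leq n-2$ keeps the dependency small and a direct estimate closes the inequality exactly as in Case~1. For $6 \leq \delta \leq 15$, the closed form $C(\delta) = e^{(3\log(\delta^{3}+2\delta^{2}+3) - 3(\log 3 - 1))/(\delta-3)} - 2$ is engineered so that $(C(\delta)+2)^{(\delta-3)/3} = (e/3)(\delta^{3}+2\delta^{2}+3)$; combined with $\lceil \delta/3 \rceil - 1 \geq (\delta-3)/3$ this delivers precisely the bound required. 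Once the LLL condition is verified, Lemma~\ref{lem3} produces a coloring realizing $\bigwedge_{u} \bar{B_{u}}$, which completes the proof of the claim.
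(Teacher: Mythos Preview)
Your proposal is correct and follows essentially the same route as the paper: both arguments use the definition of $B_{2}$ together with the $\delta/3$-strong property to get $|N_{X_{2}}(u)|\geq\lceil\delta/3\rceil$, bound $\Pr[B_{u}]\leq(C(\delta)+2)^{-\lceil\delta/3\rceil+1}$, use the definition of $X_{2}$ (each $X_{2}$-vertex has fewer than $(\delta+1)^{2}$ neighbours in $B$) to cap the dependency at $((\delta+1)^{2}-1)\lceil\delta/3\rceil$, and then invoke Lemma~\ref{lem3}. Your added algebraic unpacking of why the chosen $C(\delta)$ makes the LLL inequality hold is more detailed than what the paper provides, which simply asserts the inequality.
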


Notice that every vertex $u$ of $B_{2}$ has at least $\delta/3$
neighbors in $X_{2}$ since $S$ is a connected $\delta/3$-strong
$2$-step dominating set of $G$. Let $A_{u}$ denote the event that
$N_{X_{2}}(u)$ receives at least two distinct colors. Fix a set
$X_{2}(u)\subset N_{X_{2}}(u)$ with
$|X_{2}(u)|=\lceil\delta/3\rceil$. Let $B_{u}$ denote the event that
all vertices of $X_{2}(u)$ are colored the same. Therefore,
$Pr[B_{u}]\leq (C(\delta)+2)^{-\lceil\delta/3\rceil+1}$. Moreover,
the event $B_{u}$ is independent of all other events $B_{v}$ for
$v\neq u$ but at most $((\delta+1)^{2}-1)\lceil\delta/3\rceil$ of
them. Since
$e\cdot(C(\delta)+2)^{-\lceil\delta/3\rceil+1}(((\delta+1)^{2}-1)\lceil\delta/3\rceil+1)<1$,
we have $Pr[\bigwedge_{u\in B_{2}}\bar{B_{u}}]>0$ by Lemma 3. Hence,
we have $Pr[A_{u}]>0$.

Similarly, we can check that $G$ is also total-rainbow connected.

The proof is now complete. \qed

\end{document}